\date{}
\def\@citex[#1]#2{\if@filesw\immediate\write\@auxout{\string\citation{#2}}\fi
  \def\@citea{}\@cite{\@for\@citeb:=#2\do
    {\@citea\def\@citea{,\linebreak[0]\hskip0pt plus .2em}%
      \@ifundefined{b@\@citeb}%
    {{\bf ?}\@warning{Citation `\@citeb' on page \thepage\space undefined}}%
      \hbox{\csname b@\@citeb\endcsname}}}{#1}}
\newtheorem{theorem}{Theorem}[section]
\newtheorem{definition}{Definition}
\newtheorem{example}{Example}[section]
\newtheorem{rule-def}[theorem]{Rule}
\numberwithin{equation}{section}
\begin{document}
\title{Solving coupled Lane-Emden equations by  Green's function and decomposition technique}
\author{Randhir Singh\thanks{Corresponding author. E-mail:~{randhir.math@gmail.com} } \\
\small Department of Mathematics, Birla Institute of Technology Mesra, Ranchi-835215, India.
}\maketitle{}
\begin{abstract}
\noindent  In this paper, the Green's function and decomposition technique is proposed  for solving the coupled Lane-Emden equations. This approach depends on constructing Green's function before establishing the recursive scheme for the series solution.
Unlike, standard Adomian decomposition method, the present method avoids solving a sequence of transcendental equations for the undetermined coefficients. Convergence and error estimation is  provided. Three  examples of coupled Lane-Emden equations are considered to demonstrate the accuracy of the current algorithm.
\end{abstract}
\textbf{Keyword}: Coupled Lane-Emden equations; Green's function; Adomian decomposition method;  Convergence analysis.
\section{Introduction}
This paper aims to extend the application of  the Adomian decomposition method with Green's function \cite{singh2013numerical,singh2014efficient} for solving the following  coupled Lane-Emden boundary value problems
 \begin{eqnarray}\label{sec1:eq1}
\left\{
  \begin{array}{ll}
\displaystyle y''_{1}(x)+\frac{\alpha_1}{x}y'_{1}(x)=f_1(x,y_1(x), y_2(x)),~~~~x\in (0,1) \vspace{0.15cm}\\
\displaystyle y''_{2}(x)+\frac{\alpha_2}{x}y'_{2}(x)=f_2(x,y_1(x), y_2(x)), \vspace{0.15cm}\\
y'_{1}(0)=0,~~y'_{2}(0)=0,\vspace{0.15cm}\\
a_{1} y_{1}(1)+b_{1} y_{1}'(1)=c_{1},~~a_{2} y_{2}(1)+b_{2} y_{2}'(1)=c_{2},
\end{array} \right.
\end{eqnarray}
where $a_1,a_2,b_1,b_2,c_1, c_2$ are real constants. In recent years, singular boundary value problems  for ordinary differential equations have been studied extensively \cite{m2003decomposition,inc2005different,mittal2008solution,wazwaz2011comparison,wazwaz2011variational,khuri2010novel,ebaid2011new,
wazwaz2013adomian,kumar2010modified,singh2013numerical,singh2014efficient,
singh2014adomian,singh2016effective,das2016algorithm,singh2017optimal,singh2018optimal,singh2018analytical,singh2019haar,singh2019modified,singh2020haar,singh2019analytic} and references therein.  However, we find only the following results on coupled Lane-Emden equations.

Recently, in \cite{muthukumar2014analytical,wazwaz2014study,duan2015oxygen} authors studied  \eqref{sec1:eq1} with boundary conditions $y'_{1}(0)=y'_{2}(0)=0,$ $y_{1}(1)=y_{2}(1)=1$ and $\alpha_1=\alpha_2=2$ that relates the concentration of the carbon substrate and the concentration of oxygen.  In \cite {flockerzi2011coupled,rach2014solving}, authors considered the  coupled Lane--Emden equations \eqref{sec1:eq1} with  boundary conditions $y'_{1}(0)=y'_{2}(0)=0$, $y_{1}(1)=1,~y_{2}(1)=2$ and   $\alpha_1=\alpha_2=2$ occurs in catalytic diffusion reactions.   In \cite{rach2014solving,duan2015oxygen}, the Adomian decomposition method was applied to obtain a convergent analytic approximate solution  of \eqref{sec1:eq1}  with  $\alpha_1=\alpha_2=2$. Later, in \cite{wazwaz2016variational}, the variational iteration method was applied to obtain approximations to solutions of \eqref{sec1:eq1} for shape factors $\alpha_1,\alpha_2=1,2,3$. In \cite{saadatmandi2018numerical}, the Sinc-collocation method was used   to obtain the solution of  \eqref{sec1:eq1}. In \cite{hao2015solving} authors used the reproducing kernel Hilbert space method for solving  to obtain the solution of  \eqref{sec1:eq1}.

\section{Adomian decomposition method}
Recently, many researchers \cite{adomian1983inversion,adomian1993,adomian1994,Adomian1994a,wazwaz2000approximate,wazwaz2001reliable,jang2008two} have applied  the Adomian decomposition method to deal with many different scientific models. According to the Adomian decomposition method we rewrite \eqref{sec1:eq1} in a operator form as
\begin{align}\label{sec1:e3}
\left\{
  \begin{array}{ll}
\displaystyle L_1y_{1}(x)=f_1(x,y_1(x),y_2(x)),~~~~x\in (0,1) \vspace{0.2cm}\\
\displaystyle L_2y_{2}(x)=f_2(x,y_1(x),y_2(x)),
\end{array} \right.
\end{align}
where $L_1=x^{-\alpha_1}\frac{d}{dx}\left[x^{\alpha_1}\frac{d}{dx}\right]$  and $L_2=x^{-\alpha_2}\frac{d}{dx}\left[x^{\alpha_2}\frac{d}{dx}\right]$ are differential operators and their  inverse integral operators are defined as
\begin{eqnarray}
\left\{
  \begin{array}{ll}
L_{1}^{-1}[\cdot]=\displaystyle\int\limits_{0}^{x} x^{-\alpha_1}\int\limits_{0}^{x} x^{\alpha_1}[\cdot] \ dx  dx,\vspace{0.1cm}\\
L_{2}^{-1}[\cdot]=\displaystyle \int\limits_{0}^{x} x^{-\alpha_2}\int\limits_{0}^{x} x^{\alpha_2} [\cdot] \ dx  dx.\\
\end{array} \right.
\end{eqnarray}
Operating   $L_{1}^{-1}[\cdot],~L_{2}^{-1}[\cdot]$ on \eqref{sec1:e3} and using $y_{1}'(0)=y_{2}'(0)=0$, we get
\begin{align}\label{sec1:e4}
\left\{
  \begin{array}{ll}
 y_1(x)=y_1(0)+L_{1}^{-1} [f_1(x,y_1(x), y_2(x))],\vspace{0.2cm}\\
 y_2(x)=y_2(0)+L_{2}^{-1} [f_2(x,y_1(x), y_2(x))].
 \end{array} \right.
\end{align}
According to the ADM, we decompose $y_i(x)$ and  $f_i(x,y_1(x), y_2(x))$ as
\begin{align}\label{sec1:e5}
\left\{
  \begin{array}{ll}
y_1(x)=\displaystyle\sum_{j=0}^{\infty}y_{1j}(x),~~~~f_1(x,y_1(x), y_2(x))=\sum_{j=0}^{\infty}A_{1j},\vspace{0.3cm}\\
y_2(x)=\displaystyle \sum_{j=0}^{\infty}y_{2j}(x),~~~~f_2(x,y_1(x), y_2(x))=\sum_{j=0}^{\infty}A_{2j},
 \end{array} \right.
\end{align}
where $A_{ij}$ are Adomian's polynomials \cite{adomian1983inversion} are given
\begin{eqnarray}\label{sec1:e6}
A_{in}=\frac{1}{n!}\frac{d^n}{d
\lambda^n}\left[f_{i}\left(x, \sum_{j=0}^{\infty} y_{1j}\ \lambda^j,~~ \sum_{j=0}^{\infty} y_{2j}\ \lambda^j \right)\right]_{\lambda=0},~~~i=1,2.
\end{eqnarray}
Substituting  \eqref{sec1:e5} into \eqref{sec1:e4}, we get
\begin{align}\label{sec1:e7}
\left\{
  \begin{array}{ll}
\displaystyle \sum_{j=0}^{\infty}y_{1j}(x)=y_{1}(0)+L_{1}^{-1}\bigg[\displaystyle\sum_{j=0}^{\infty}A_{1j}\bigg],\vspace{0.3cm}\\
\displaystyle \sum_{j=0}^{\infty}y_{2j}(x)=y_{2}(0)+L_{2}^{-1}\bigg[\displaystyle\sum_{j=0}^{\infty}A_{2j}\bigg].
 \end{array} \right.
\end{align}
 Upon comparing both  sides of  \eqref{sec1:e7}, we have
  \begin{eqnarray}\label{sec1:e8}
\left\{
  \begin{array}{ll}
\displaystyle y_{10}(x)=\delta_1,~~y_{20}(x)=\delta_2,~~~~~~~\vspace{0.3cm}\\
\displaystyle y_{1j}(x,\delta_1,\delta_2)= L_{1}^{-1}[A_{1, j-1}],\vspace{0.3cm}\\
\displaystyle y_{2j}(x,\delta_1,\delta_2)= L_{2}^{-1}[A_{2, j-1}],~~~j=1,2,3....
\end{array} \right.
\end{eqnarray}
where $y_{1}(0)=\delta_1,~y_{2}(0)=\delta_2$ are unknown constants to be determined. The $n$-term series solutions are given as
\begin{eqnarray}
\left\{
  \begin{array}{ll}
\displaystyle \phi_{1n}(x,\delta_1,\delta_2)=\displaystyle\sum_{j=0}^{n} y_{1j}(x,\delta_1,\delta_2),~~~~~~~\vspace{0.1cm}\\
\displaystyle \phi_{2n}(x,\delta_1,\delta_2)=\displaystyle \sum_{j=0}^{n} y_{2j}(x,\delta_1,\delta_2).
\end{array} \right.
\end{eqnarray}
 The unknown constants may  be obtained  by imposing  boundary condition at $x=1$ on $\phi_{in}(x,\delta_1,\delta_2)$, that leads to
 \begin{eqnarray}
\left\{
  \begin{array}{ll}
 a_1 \phi_{1 n}(1,\delta_1,\delta_2)+b_1 \phi'_{1 n}(1,\delta_1,\delta_2)-c_1=0,~~~~~~~\vspace{0.3cm}\\
  a_2 \phi_{2 n}(1,\delta_1,\delta_2)+b_2 \phi'_{2 n}(1,\delta_1,\delta_2)-c_2=0.
 \end{array} \right.
\end{eqnarray}
Solving above transcendental equations for $\delta_i$ require additional computational work, and $\delta_i$ may not be uniquely determined.

To avoid solving the above sequence of difficult transcendental equations, the Adomian decomposition method with Green's function was introduced in \cite{singh2013numerical,singh2014efficient}.  This technique relies on constructing Green's function before establishing the recursive scheme for the solution components. Unlike the standard Adomian decomposition method, this avoids solving a sequence of transcendental equations for the undetermined coefficients.

\section{Green's function and decomposition technique}
In this section, we extend the application of the Adomian decomposition method with Green's function \cite{singh2014efficient,singh2014adomian,singh2014approximate,singh2016numerical,singh2016efficient}, where we transformed the singular boundary value problem into the integral equation before establishing the recursive scheme for the approximate solution. To apply this technique to  coupled Lane-Emden boundary value problems  \eqref{sec1:eq1}, we  first consider the equivalent integral form of coupled Lane-Emden equation  \eqref{sec1:eq1} as
\begin{align}\label{sec2:e1}
\left\{
  \begin{array}{ll}
\displaystyle y_1(x)=\frac{c_1}{a_1}+\int\limits_{0}^{1} \ G_1(x,s) \ s^{\alpha_1}\ f_1\big(s,y_1(s),y_2(s)\big)ds,~~~~~~~\vspace{0.3cm}\\
\displaystyle y_2(x)=\frac{c_2}{a_2}+\int\limits_{0}^{1} \ G_2(x,s) \ s^{\alpha_2}\ f_2\big(s,y_1(s),y_2(s)\big)ds,
 \end{array} \right.
\end{align}
where  $G_i(x,s)$ are given by
\begin{align}\label{sec2:e2}
G_i(x,s)= \left   \{
  \begin{array}{ll}
  \ln s, & x\leq s,~~~~~~\alpha_i=1,~~~i=1,2, \vspace{.2cm} \\
   \ln x , & s\leq x,
\end{array}
\right.
\end{align}
and
\begin{align}\label{sec2:e3}
G_i(x,s)= \left   \{
  \begin{array}{ll}
   \displaystyle \frac{s^{1- \alpha_i}-1}{1- \alpha_i},  & x \leq s,  ~~~~\alpha_i>1,~~~~i=1,2, \vspace{.2cm} \\
    \displaystyle \frac{x^{1- \alpha_i}-1}{1-\alpha_i}, & s\leq x.
\end{array}
\right.
\end{align}
Substituting the series \eqref{sec1:e5} into  \eqref{sec2:e1}, we obtain
\begin{eqnarray}\label{sec2:eq7}
\left\{
  \begin{array}{ll}
 \displaystyle \sum_{j=0}^{\infty}y_{1j}(x)=\frac{c_1}{a_1}+\int\limits_{0}^{1} \ G_1(x,s) \ s^{\alpha_1}\ \sum_{j=0}^{\infty}A_{1j}\ ds,\\
 \displaystyle \sum_{j=0}^{\infty}y_{2j}(x)=\frac{c_2}{a_2}+\int\limits_{0}^{1} \ G_2(x,s) \ s^{\alpha_2}\ \sum_{j=0}^{\infty}A_{2j}\ ds.
\end{array}
\right.
\end{eqnarray}
Comparing components from both  sides of \eqref{sec2:eq7} we have the following  recursive scheme
\begin{eqnarray}\label{sec2:eq8}
\left.
  \begin{array}{ll}
\displaystyle y_{10}(x)=\frac{c_1}{a_1},~~~y_{20}(x)=\frac{c_2}{a_2},~~~\vspace{.1cm} \\
\displaystyle y_{1j}(x)=\int\limits_{0}^{1} \ G_1(x,s) \ s^{\alpha_1}\ A_{1,j-1}\ ds,~~~\vspace{.1cm} \\
\displaystyle y_{2j}(x)=\int\limits_{0}^{1} \ G_2(x,s) \ s^{\alpha_2}\ A_{2,j-1}\ ds.
  \end{array}
\right\}
\end{eqnarray}
Then, we obtain  the approximate series solutions as
\begin{eqnarray}\label{sec2:eq9}
\left.
  \begin{array}{ll}
\displaystyle \psi_{1n}(x)=\sum_{j=0}^{n} y_{1j}(x),~~~\vspace{.1cm}\\
\displaystyle \psi_{2n}(x)=\sum_{j=0}^{n} y_{2j}(x).
  \end{array}
\right\}
\end{eqnarray}
Unlike ADM or MADM, the proposed recursive schemes \eqref{sec2:eq9} do not require any computation of unknown constants.

\section{Convergence and error analysis}
 Let $E= \big(C [0,1], \|y\|\big)$ be a Banach space with norm
 \begin{align}\label{sec4:eq0}
\|y\|=\displaystyle\max\{\|y_1\|, \|y_2\|\}, ~~~~~~~y\in E,
\end{align}
 where, $\|y_1\|=\displaystyle\max_{ x\in I=[0,1]} |y_1(x)|$ and $\|y_2\|=\displaystyle\max_{ x\in I} |y_2(x)|$.

From \eqref{sec2:eq8} and \eqref{sec2:eq9}, we have
\begin{align}\label{sec4:eq1}
 \Psi_{n}&= \sum_{j=0}^{n} \textbf{y}_j(x)= \frac{\textbf{c}}{\textbf{a}}+\sum_{j=1}^{n}\bigg[\int\limits_{0}^{1} \textbf{G}(x,s)  \textbf{s}^{\alpha}  \textbf{A}_{j-1} ds\bigg]
= \frac{\textbf{c}}{\textbf{a}}+ \int\limits_{0}^{1} \textbf{G}(x,s)   \textbf{s}^{\alpha} \sum_{j=1}^{n} A_{j-1} ds,
\end{align}
where
 \begin{align*}
 \textbf{y}&=\bigg(\begin{array}{c} y_{1} \\ y_{2} \end{array}\bigg),~~ \textbf{y}_j=\bigg(\begin{array}{c} y_{1j} \\ y_{2j} \end{array}\bigg),\textbf{G}(x,s)=\bigg(\begin{array}{c} G_1(x,s) \\ G_2(x,s) \end{array}\bigg),~~\textbf{A}_j=\bigg(\begin{array}{c} A_{1j} \\ A_{2j} \end{array}\bigg)\\
 \textbf{f}&=\bigg(\begin{array}{c} f_1 \\ f_2 \end{array}\bigg),~~\textbf{s}^{\alpha}=\bigg(\begin{array}{c} s^{\alpha_1} \\ s^{\alpha_2}\end{array}\bigg),~~ \frac{\textbf{c}}{\textbf{a}}=\bigg(\begin{array}{c} \frac{c_1}{a_1} \\ \frac{c_2}{a_2} \end{array}\bigg),~~\Psi_{n}=\left(\begin{array}{c} \psi_{1n} \\ \psi_{2n}\end{array}\right).
\end{align*}

\begin{definition}
{\rm The function $f(x,y_1,y_2)$ satisfy Lipschitz  condition as
\begin{align}\label{sec4:eq2}
|f(x,y_1,y_2)-f(x,y_1^{*},y_2^{*})|\leq  \sum _{j=1}^{2} l_j |y_j-y_j^{*}|,~~\forall (x,y_1,y_2),~(x,y_1^{*},y_2^{*}) \in D,
\end{align}
where $D=\{{[0,1]\times R\times R } \}$, and   $l_1$, $l_2$ are  Lipschitz constants.}
\end{definition}
\begin{theorem}\label{sec4:eq4}
\rm{ Suppose that the nonlinear function $\textbf{f}(x,y_1,y_2)$ satisfy Lipschitz  condition \eqref{sec4:eq2}, then the series solution $\sum_{j=0}^{\infty} \textbf{y}_{j}$  defined by \eqref{sec2:eq9} is convergent whenever $\gamma< 1$.}
\end{theorem}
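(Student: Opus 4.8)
The plan is to show that the sequence of vector partial sums $\{\Psi_n\}$ defined by \eqref{sec2:eq9} is a Cauchy sequence in the Banach space $E$, and then to invoke the completeness of $E$ to conclude that the series $\sum_{j=0}^{\infty}\mathbf{y}_j$ converges. The natural candidate for the contraction constant is
$$\gamma = M\,(l_1+l_2), \qquad M = \max_{i=1,2}\ \max_{x\in[0,1]}\int_0^1 |G_i(x,s)|\,s^{\alpha_i}\,ds,$$
so the first preliminary step is to verify that $M$ is finite. This requires inspecting both forms of the kernel: for $\alpha_i=1$ the Green's function in \eqref{sec2:e2} involves $\ln x$ and $\ln s$, while for $\alpha_i>1$ the expression in \eqref{sec2:e3} contains $s^{1-\alpha_i}$, which is singular at $s=0$. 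In the latter case the weight $s^{\alpha_i}$ tames the singularity, since $s^{\alpha_i}\,s^{1-\alpha_i}=s$ is integrable on $[0,1]$; hence $\int_0^1|G_i(x,s)|s^{\alpha_i}\,ds$ is bounded uniformly in $x$ and $M<\infty$.

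The central estimate is the geometric decay of the components, namely $\|\mathbf{y}_{n+1}\|\le \gamma\,\|\mathbf{y}_n\|$. Starting from the recursion \eqref{sec2:eq8}, I would take the vector norm, pull it inside the integral, and use the uniform bound $M$ to get $\|\mathbf{y}_{n+1}\|\le M\,\|\mathbf{A}_n\|$. The crux is then to control the Adomian polynomial $\mathbf{A}_n$ by the previous component: applying the Lipschitz condition \eqref{sec4:eq2} componentwise gives $|A_{i,n}|\le l_1\|y_{1n}\|+l_2\|y_{2n}\|\le (l_1+l_2)\|\mathbf{y}_n\|$, and taking the maximum over $i$ yields $\|\mathbf{A}_n\|\le (l_1+l_2)\|\mathbf{y}_n\|$. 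Combining the two inequalities gives $\|\mathbf{y}_{n+1}\|\le \gamma\|\mathbf{y}_n\|$, and iterating produces $\|\mathbf{y}_n\|\le \gamma^{\,n}\|\mathbf{y}_0\|$.

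With the geometric bound established, the Cauchy estimate is routine: for $m>n$,
$$\|\Psi_m-\Psi_n\|\le \sum_{j=n+1}^{m}\|\mathbf{y}_j\|\le \|\mathbf{y}_0\|\sum_{j=n+1}^{m}\gamma^{\,j}\le \|\mathbf{y}_0\|\,\frac{\gamma^{\,n+1}}{1-\gamma},$$
and the right-hand side tends to $0$ as $n\to\infty$ precisely because $\gamma<1$. Thus $\{\Psi_n\}$ is Cauchy in the complete space $E$, so it converges, which is the assertion of the theorem.

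I expect the main obstacle to be the estimate $\|\mathbf{A}_n\|\le (l_1+l_2)\|\mathbf{y}_n\|$. Unlike the linear integral operator, the polynomials $\mathbf{A}_n$ defined in \eqref{sec1:e6} mix all earlier components $\mathbf{y}_0,\dots,\mathbf{y}_n$ through higher $\lambda$-derivatives of $\mathbf{f}$, so the reduction to the single term $\|\mathbf{y}_n\|$ is not immediate from the definition and must be drawn carefully out of the Lipschitz hypothesis. I would make this precise by interpreting $\mathbf{A}_n$ as the increment of the nonlinearity between consecutive partial sums, i.e.\ relating it to $\mathbf{f}(x,\Psi_{1,n},\Psi_{2,n})-\mathbf{f}(x,\Psi_{1,n-1},\Psi_{2,n-1})$, and then applying \eqref{sec4:eq2}. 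The remaining steps, the finiteness of $M$ and the geometric summation, are straightforward.
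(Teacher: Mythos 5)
Your proposal follows essentially the same route as the paper: both control the partial sums of the Adomian polynomials by the nonlinearity evaluated at the partial sums of the solution, apply the Lipschitz condition to get a geometric contraction (your $\|\mathbf{y}_{n+1}\|\le\gamma\|\mathbf{y}_n\|$ is exactly the paper's $\|\Psi_{n}-\Psi_{m}\|\le\gamma\|\Psi_{n-1}-\Psi_{m-1}\|$ specialized to $n=m+1$), and conclude by summing the geometric series and invoking completeness of $E$. The one slip is the base of your iteration: the Lipschitz condition bounds $\mathbf{A}_n$ only for $n\ge 1$, when it can be written as a difference of nonlinearities, so the chain must terminate at $\|\mathbf{y}_1\|$ rather than $\|\mathbf{y}_0\|$ --- precisely as in the paper's final estimate $\gamma^{m}\|\mathbf{y}_{1}\|/(1-\gamma)$.
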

\begin{proof}
Define
$$\Psi_{0}=\textbf{x}_{0},~\Psi_{1}=\textbf{x}_{0}+\textbf{x}_{1},
\ldots,\Psi_{n}=\sum_{k=0}^{n} \textbf{y}_{k}.$$

For   $n>m$  and using  \eqref{sec4:eq1},  we have
\begin{align*}
\|\Psi_{n}-\Psi_{m}\|&= \max_{ x\in I}\bigg|  \int\limits_{0}^{1}\textbf{G}(x,s)  \textbf{s}^{\alpha}  \bigg(\sum_{j=1}^{n} \textbf{A}_{j-1}-\sum_{j=1}^{m} \textbf{A}_{j-1}\bigg)ds\bigg|.
 \end{align*}
Using $\sum_{j=0}^{n}  \textbf{A}_{j}\leq \textbf{f}(s,\psi_{1n},\psi_{2n})$ from (\cite{rach2008new}), we have
\begin{align*}
\|\Psi_{n}-\Psi_{m}\|&\leq  \max_{ x\in I}\bigg|\int\limits_{0}^{1}\textbf{G}(x,s)  \textbf{s}^{\alpha}  \big[\textbf{f}\big(s,\psi_{1,n-1},\psi_{2,n-1})-\textbf{f}(s,\psi_{1,m-1},\psi_{2,m-1}\big)\big]ds\bigg|.
\end{align*}
Applying  the Lipschitz  condition, we get
\begin{align*}
\|\Psi_{n}-\Psi_{m}\| &\leq   \max_{ x\in I}\bigg|\int\limits_{0}^{1}\textbf{G}(x,s)  \textbf{s}^{\alpha}  ds\bigg| \sum _{i=1}^{2} l_i \max_{ x\in I} |\psi_{i,n-1}-\psi_{i,m-1}|\\
&\leq  \max_{ x\in I}\bigg|\int\limits_{0}^{1}\textbf{G}(x,s)  \textbf{s}^{\alpha}  ds\bigg| 2l  \max \bigg\{  \|\psi_{1,n-1}-\psi_{1,m-1}\|,   \|\psi_{2,n-1}-\phi_{2,m-1}\| \bigg\}\\
&\leq   2 \textbf{m} l  \|\Psi_{n-1}-\Psi_{m-1}\|=\delta \|\Psi_{n-1}-\Phi_{m-1}\|,
\end{align*}
where
\begin{align}\label{sec4:eq5}
\nonumber &\textbf{m}=\max \bigg\{\max\big|\int\limits_{0}^{1}G_1(x,s)  s^{\alpha_1}  ds\big|, \max\big|\int\limits_{0}^{1}G_2(x,s)  s^{\alpha_2}  ds\big|\bigg\},\\
&l=\max \{l_1,l_2\},~~\gamma=  2\ \textbf{m} l.
\end{align}
Thus, we have
\begin{align}\label{sec4:eq6}
\|\Psi_{n}-\Psi_{m}\|\leq \gamma \|\Psi_{n-1}-\Psi_{m-1}\|.
\end{align}
By taking   $n=m+1$ in \eqref{sec4:eq6}, we see that
\begin{align*}
\|\Psi_{m+1}-\Psi_{m}\|&\leq \gamma \|\Psi_{m}-\Psi_{m-1}\| \leq \gamma^2 \|\Psi_{m-1}-\Psi_{m-2}\|
\leq \ldots \leq \gamma^{m} \|\Psi_{1}-\Psi_{0}\|.
\end{align*}
For all $n,m\in \mathbb{N}$, with  $n>m$, consider
\begin{align*}
&\|\Psi_{n}-\Psi_{m}\|=\|(\Psi_{n}-\Psi_{n-1})+(\Psi_{n-1}-\Psi_{n-1})+\cdots+(\Psi_{m+1}-\Psi_{m})\|\\
&\leq \|\Psi_n-\Psi_{n-1}\|+\|\Psi_{n-1}-\Psi_{n-2}\|+\cdots+\|\Psi_{m+1}-\Psi_{m}\|\\
&\leq   [\gamma^{n-1} + \gamma^{n-2} +\cdots+ \gamma^{m}] \ \|\Psi_{1}-\Psi_{0}\|\\
&=  \gamma^{m}[ 1+\gamma  + \gamma^{2} +\cdots+\gamma^{n-m-1} ] \ \|\textbf{y}_{1}\|\\
&=\gamma^{m}\left( \frac{1-\gamma^{n-m}}{1-\gamma}\right)\|\textbf{y}_{1}\|.
\end{align*}
It  follows that as  $\gamma<1$,
\begin{eqnarray}\label{sec4:eq7}
\|\Psi_{n}-\Psi_{m}\|\leq&\displaystyle\frac{\gamma^{m}}{1-\gamma}\|\textbf{y}_{1}\|.
\end{eqnarray}
Letting  $n, m \to \infty$, we obtain $\lim_{n,m\rightarrow \infty} \|\Psi_{n}-\Psi_{m}\|=0.$ Hence,  $\{\Psi_{n}\}$ is a Cauchy sequences in the Banach space  $E$.
\end{proof}

\begin{theorem}\label{sec4:eq8}
{\rm If the approximate solution $\Psi_{n}$ converges to $\textbf{y}(x)$,
 then the maximum absolute truncated error is estimated
 \begin{align}\label{sec4:eq9}
\|\textbf{y}(x)-\Psi_{m}\|\leq\frac{\gamma^{m} \ \textbf{m} }{1-\gamma}  \max_{x\in I}\big|\textbf{f}(x,y_{10},y_{20}) \big|.
\end{align}}
\end{theorem}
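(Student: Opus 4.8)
The plan is to obtain this estimate as a direct corollary of the Cauchy bound already established in the proof of Theorem \ref{sec4:eq4}, by passing to the limit and then specializing the first correction term. First I would recall inequality \eqref{sec4:eq7}, which for $n>m$ reads
$$\|\Psi_{n}-\Psi_{m}\|\leq\frac{\gamma^{m}}{1-\gamma}\|\textbf{y}_{1}\|.$$
Since by hypothesis $\Psi_{n}\to\textbf{y}(x)$ as $n\to\infty$ (which Theorem \ref{sec4:eq4} guarantees when $\gamma<1$, the sequence being Cauchy in the Banach space $E$), I would fix $m$ and let $n\to\infty$. Because the norm $\|\cdot\|$ is continuous, the left-hand side tends to $\|\textbf{y}(x)-\Psi_{m}\|$ while the right-hand side does not depend on $n$, giving
$$\|\textbf{y}(x)-\Psi_{m}\|\leq\frac{\gamma^{m}}{1-\gamma}\|\textbf{y}_{1}\|.$$

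The remaining task is to bound $\|\textbf{y}_{1}\|$ by the quantities appearing in \eqref{sec4:eq9}. I would invoke the recursive scheme \eqref{sec2:eq8} at $j=1$, which in vector form is $\textbf{y}_{1}=\int_{0}^{1}\textbf{G}(x,s)\,\textbf{s}^{\alpha}\,\textbf{A}_{0}\,ds$. The key observation is that the zeroth Adomian polynomial is just the nonlinearity evaluated at the leading components, namely $\textbf{A}_{0}=\textbf{f}(x,y_{10},y_{20})$, which follows immediately from \eqref{sec1:e6} at $n=0$. Taking norms and pulling the supremum of $|\textbf{f}|$ outside the integral then yields
$$\|\textbf{y}_{1}\|\leq\max_{x\in I}\bigg|\int_{0}^{1}\textbf{G}(x,s)\,\textbf{s}^{\alpha}\,ds\bigg|\,\max_{x\in I}\big|\textbf{f}(x,y_{10},y_{20})\big|\leq\textbf{m}\,\max_{x\in I}\big|\textbf{f}(x,y_{10},y_{20})\big|,$$
where the last step is precisely the definition of $\textbf{m}$ in \eqref{sec4:eq5}. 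Substituting this bound into the previous display produces the claimed estimate \eqref{sec4:eq9}.

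I do not anticipate a genuine obstacle, since the substantive work (the geometric decay and the Lipschitz contraction) was already carried out in deriving \eqref{sec4:eq7}; this theorem is its natural consequence. The only points deserving a little care are the component-wise reading of the vector inequalities and the identification $\textbf{A}_{0}=\textbf{f}(x,y_{10},y_{20})$, together with justifying that the integrated Green's kernel factors out uniformly across the two equations of the coupled system — and this is exactly what the maximum defining $\textbf{m}$ accomplishes.
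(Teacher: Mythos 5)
Your proposal is correct and follows essentially the same route as the paper's own proof: pass to the limit $n\to\infty$ in the Cauchy estimate \eqref{sec4:eq7}, then bound $\|\textbf{y}_{1}\|$ via the recursive scheme \eqref{sec2:eq8} using $\textbf{A}_{0}=\textbf{f}(x,y_{10},y_{20})$ and the constant $\textbf{m}$ from \eqref{sec4:eq5}. Your write-up is in fact slightly more careful than the paper's, since you explicitly justify the identification of $\textbf{A}_{0}$ and the continuity-of-norm step.
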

\begin{proof}
From  \eqref{sec4:eq7}, we have
\begin{align*}
\|\Psi_{n}-\Psi_{m}\|\leq&\displaystyle\frac{\gamma^{m}}{1-\gamma}\|\textbf{y}_{1}\|.
\end{align*}
Since  $\Psi_{n}\rightarrow \textbf{x}(t)$ as $n\rightarrow \infty$, and the above inequality reduces to
\begin{align}\label{sec4:eq10}
\|\textbf{y}(x)-\Psi_{m}\|\leq\frac{\gamma^{m}}{1-\gamma}   \|\textbf{y}_{1}\|.
\end{align}
From \eqref{sec2:eq8}, we have $\textbf{y}_{1}=\int\limits_{0}^{1}\textbf{G}(x,s)  \textbf{s}^{\alpha} \ \textbf{A}_{0} ds$,
and we find
\begin{align}\label{sec4:eq11}
 \|\textbf{y}_{1}\|&= \max_{x\in I}\bigg|\int\limits_{0}^{1}\textbf{G}(x,s)  \textbf{s}^{\alpha}  \textbf{A}_{0} ds \bigg|\leq |\textbf{m} \max_{x\in I}\big|\textbf{f}(x,y_{10},y_{20}) \big|.
\end{align}
Combining \eqref{sec4:eq10} and \eqref{sec4:eq11},  we obtain error estimate as
\begin{align}
\|\textbf{y}(t)-\Psi_{m}\|\leq\frac{\gamma^{m} \     \textbf{m} }{1-\gamma}  \max_{x\in I}\big|\textbf{f}(x,y_{10},y_{20}) \big|.
\end{align}
which completes the proof.
\end{proof}

\section{Numerical Results}
In this section, we consider three coupled Lane-Emden type boundary value problems to examine the accuracy of the present method. Since the exact solution of the problems is not known, we examine the accuracy and applicability of the present method by the absolute residual error
\begin{align}
\left\{
  \begin{array}{ll}
  r_{1n}(x):= \displaystyle  \bigg|\psi''_{1n}(x)+\frac{\alpha_1}{x}\psi'_{1n}(x)+f_1\bigg(x,\psi_{1n}(x),\psi_{2n}(x)\bigg)\bigg|,~~~~~x\in(0,1),\vspace{0.20cm}\\
 r_{2n}(x):= \displaystyle  \bigg|\psi''_{2n}(x)+\frac{\alpha_2}{x}\psi'_{2n}(x)+f_2\bigg(x,\psi_{1n}(x),\psi_{2n}(x)\bigg)\bigg|,\\
\end{array} \right.
\end{align}
where $ [r_{1n}, r_{2n}]^{T}$ is the absolute residual error and  $ [\psi_{1n}, \psi_{2n}]^{T}$  is the present approximate  solution. The maximum residual errors are defined as
\begin{align}
\left\{
  \begin{array}{ll}
 maxr_{1n}:= \displaystyle \max_{ x\in [0,1]}r_{1n}(x),\vspace{0.35cm}\\
maxr_{2n}:= \displaystyle \max_{ x\in [0,1]}r_{2n}(x).
\end{array} \right.
\end{align}

\subsection{The catalytic diffusion reactions problem \cite{flockerzi2011coupled,rach2014solving}}
\begin{example}\label{sec5:eq1}
{\rm Consider the coupled Lane-Emden equation  occurs in catalytic diffusion reactions \cite{flockerzi2011coupled,rach2014solving} as
\begin{align}
\left\{
  \begin{array}{ll}
 \displaystyle  y''_{1}(x)+\frac{2}{x}y'_{1}(x)=-k_{1} \ y_{1}^{2}(x)-k_{2} \ y_1(x)  y_2(x),~~~~~x\in(0,1),\vspace{0.1cm}\\
 \displaystyle  y''_{2}(x)+\frac{2}{x}y'_{2}(x)=-k_{3}\ y_{1}^{2}(x)-k_{4}\ y_1(x) y_2(x),\vspace{0.1cm}\\
 y'_{1}(0)=0,~~~y'_{2}(0)=0,~~~y_{1}(1)=1,~~y_{2}(1)=2,
\end{array} \right.
\end{align}
where the parameters $ k_{1}, k_{2}, k_{3}$ and $k_{4}$ are the actual chemical reactions. Here $a_1=a_2=1$, $b_1=b_2=0$, $c_1=1$ and $ c_2=2$.}
\end{example}

In Tables \ref{tab1} and \ref{tab3}, we list the numerical results of the approximate solution and the absolute error obtained by the proposed method of  Example \ref{sec5:eq1} for ($k_{1}=1,k_{2}=2/5,k_{3}=1/2, k_{4}=1$) and ($k_{1}=k_{2}=k_{3}=k_{4}=1/2$), respectively. We also compare the numerical results of the maximum residual error $[maxr_{1n}, maxr_{2n}]$ obtained by the present method and the results obtained by the modified ADM \cite{rach2014solving} in Table \ref{tab2}. In Table \ref{tab4}, we list the numerical results of the maximum residual error.

\subsubsection{When $k_{1}=1,k_{2}=2/5,k_{3}=1/2, k_{4}=1$}
By applying the proposed scheme \eqref{sec2:eq8} with the initial guesses $y_{10}(x)=1, y_{20}(x)=2$, we obtain the 5-terms series solutions as
\begin{align*}
\left\{
  \begin{array}{ll}
\psi_{15}(x)=0.776218+0.199501 x^2+0.018823 x^4+0.005706 x^6-0.0003741 x^8+0.000125 x^{10},\\
\psi_{25}(x)=1.68423+0.283069 x^2+0.0258518 x^4+0.007133 x^6-0.0004418 x^8+0.000152 x^{10}.
\end{array} \right.
\end{align*}
\begin{table}[htbp]
\caption{Numerical results of the approximate solution $ [\psi_{1n}(x), \psi_{2n}(x)]$ and the absolute error  $ [r_{1n}(x), r_{2n}(x)]$ when $k_{1}=1,k_{2}=2/5,k_{3}=1/2, k_{4}=1$ of  Example \ref{sec5:eq1}}
\centering
\setlength{\tabcolsep}{0.045in}
\begin{tabular}{l|cc| cc|cc|cc}
\hline
\cline{1-9}
$x$ & $\psi_{15}(x)$ & $\psi_{25}(x)$  & $r_{15}(x)$ & $r_{25}(x)$ &$\psi_{1,10}(x)$ &$\psi_{2,10}(x)$ &$r_{1,10}(x)$ &$r_{2,10}(x)$ \\
\cline{1-9}
0.1	&	0.7782151	&	1.6870682	&	7.00E-2 &	8.79E-2 	&	0.7836523	&	1.6938487	&	5.51E-3 	&	6.84E-3 	\\
0.2	&	0.7842287	&	1.6955995	&	6.55E-2	&	8.23E-2 	&	0.7893632	&	1.7020042	&	5.10E-3 	&	6.34E-3 	\\
0.3	&	0.7943299	&	1.7099257	&	5.85E-2	&	7.36E-2 	&	0.7989874	&	1.7157379	&	4.48E-3 	&	5.57E-3 	\\
0.4	&	0.8086434	&	1.7302167	&	4.97E-2	&	6.26E-2 	&	0.8126872	&	1.7352661	&	3.71E-3 	&	4.63E-3 	\\
0.5	&	0.8273577	&	1.7567278  &	3.98E-2	&	5.02E-2 	&	0.8306972	&	1.7609008	&	2.89E-3  	&	3.62E-3 	\\
0.6	&	0.8507387	&	1.7898165	&	2.97E-2	&	3.76E-2 	&	0.8533324	&	1.7930602	&	2.10E-3  	&	2.64E-3 	\\
0.7	&	0.8791464	&	1.8299638	&	2.02E-2	&	2.56E-2 	&	0.8809992	&	1.8322829	&	1.39E-3	   &	1.75E-3 	\\
0.8	&	0.9130553	&	1.8778003	&	1.18E-2	&	1.51E-2 	&	0.9142115	&	1.8792486	&	7.90E-4	   &	1.01E-3 	\\
0.9	&	0.9530791	&	1.9341363	&	5.09E-3	&	6.53E-3	    &	0.9536120	&	1.9348042	&	3.30E-4	    &	4.29E-4	\\
\hline
\end{tabular}
\label{tab1}
\end{table}

\begin{table}[htbp]
\caption{Comparison of the numerical results of the maximum residual error $[maxr_{1n}, maxr_{2n}]$  when $k_{1}=1,k_{2}=2/5,k_{3}=1/2, k_{4}=1$ of  Example \ref{sec5:eq1}}
\centering
\addtolength{\tabcolsep}{20pt}
\begin{tabular}{c|cc|cc}
\hline
\multirow{2}{*} & \multicolumn{2}{c|}{The present method} & \multicolumn{2}{c}{Modified ADM \cite{rach2014solving}}  \\
 \cline{2-5}
{$n$}      & $maxr_{1n}$          & $maxr_{2n}$       & $maxr_{1n}$      & $maxr_{2n}$    \\ 
\hline
2	&	8.72E-2	&	1.12E-1	&	4.13E-1	&	5.67E-1	\\
3	&	3.95E-2	&	5.05E-2	&	2.36E-1	&	3.09E-1	\\
4	&	2.00E-2	&	2.53E-2	&	6.43E-2	&	8.52E-2	\\
5	&	1.07E-2	&	1.35E-2	&	4.79E-2	&	6.09E-2	\\
6	&	6.01E-3	&	7.58E-3	&	2.09E-2	&	2.51E-2	\\
7	&	3.47E-3	&	4.37E-3	&	1.09E-2	&	1.36E-2	\\
8	&	2.06E-3	&	2.59E-3	&	6.21E-3	&	7.32E-3	\\
9	&	1.24E-3	&	1.56E-4	&	3.35E-3	&	3.28E-3	\\
10	&	7.60E-4	&	9.53E-4	&	1.79E-3	&	2.09E-3	\\
11	&	5.91E-4	&	5.91E-4	&	9.61E-4	&	1.11E-3	\\
\hline
\end{tabular}
\label{tab2}
\end{table}

\subsubsection{When $k_{1}=k_{2}=k_{3}=k_{4}=1/2$}
On applying the proposed scheme \eqref{sec2:eq8} with the initial guesses  $y_{10}(x)=1, y_{20}(x)=2$, we obtain the 5-terms series solutions as
\begin{align*}
\left\{
  \begin{array}{ll}
&\psi_{15}(x)=0.80364+0.17704 x^2+0.017049 x^4+0.00223 x^6-0.00001 x^8+0.0000316 x^{10},\\
&\psi_{25}(x)=1.80365+0.17704 x^2+0.017049 x^4+0.00223 x^6-0.00001 x^8+0.0000316 x^{10}.
\end{array} \right.
\end{align*}

\begin{table}[htbp]
\caption{Numerical results of the approximate solution $ [\psi_{1n}(x), \psi_{2n}(x)]$ and the absolute error  $ [r_{1n}(x), r_{2n}(x)]$ when $k_{1}=k_{2}=k_{3}=k_{4}=1/2$ of  Example \ref{sec5:eq1}}
\centering
\setlength{\tabcolsep}{0.04in}
\begin{tabular}{l|cc |cc|cc|cc}
\hline
\cline{1-9}
$x$ & $\psi_{15}(x)$ & $\psi_{25}(x)$  & $r_{15}(x)$ & $r_{25}(x)$ &$\psi_{1,10}(x)$ &$\psi_{2,10}(x)$ &$r_{1,10}(x)$ &$r_{2,10}(x)$ \\
\cline{1-9}
0.1	&	0.8054213	&	1.8054213	&	1.42E-2	&	1.42E-2	&	0.8065106	&	1.8065106	&	2.63E-4	&	2.63E-4	\\
0.2	&	0.8107583	&	1.8107583	&	1.33E-2	&	1.33E-2	&	0.8117871	&	1.8117871	&	2.43E-4	&	2.43E-4	\\
0.3	&	0.8197227	&	1.8197227	&	1.18E-2	&	1.18E-2	&	0.8206563	&	1.8206563	&	2.13E-4	&	2.13E-4	\\
0.4	&	0.8324215	&	1.8324215	&	1.00E-2 &	1.00E-2	&	0.8332325	&	1.8332325	&	1.76E-4	&	1.76E-4	\\
0.5	&	0.8490101	&	1.8490101	&	8.05E-3	&	8.05E-3	&	0.8496803	&	1.8496803	&	1.37E-4	&	1.37E-4	\\
0.6	&	0.8696982	&	1.8696982	&	5.98E-3	&	5.98E-3	&	0.8702190	&	1.8702190	&	9.91E-5	&	9.93E-5	\\
0.7	&	0.8947568	&	1.8947568	&	4.05E-3	&	4.05E-3	&	0.8951290	&	1.8951290	&	6.53E-5	&	6.53E-5	\\
0.8	&	0.9245277	&	1.9245277	&	2.36E-3	&	2.36E-3	&	0.9247601	&	1.9247601	&	3.71E-5	&	3.71E-5	\\
0.9	&	0.9594352	&	1.9594352	&	1.00E-3	&	1.01E-3	&	0.9595423	&	1.9595423	&	1.55E-5	&	1.55E-5	\\
\hline
\end{tabular}
\label{tab3}
\end{table}

\begin{table}[htbp]
\caption{The numerical results of the maximum residual error $[maxr_{1n}, maxr_{2n}]$  when $k_{1}=k_{2}=k_{3}=k_{4}=1/2$ of  Example \ref{sec5:eq1}}
\centering
\addtolength{\tabcolsep}{35pt}
\begin{tabular}{c|cccc}
\hline
$n$ & $maxr_{1n}$          & $maxr_{2n}$
\\ \hline
2	&	4.15E-2	&	4.15E-2	\\
3	&	1.41E-2	&	1.41E-2	\\
4	&	5.35E-3	&	5.35E-3	\\
5	&	2.15E-3	&	2.15E-3	\\
6	&	9.05E-4	&	9.05E-4	\\
7	&	3.91E-4	&	3.91E-4	\\
8	&	1.73E-4	&	1.73E-4	\\
9	&	7.83E-5	&	7.83E-5	\\
10	&	3.58E-5	&	3.58E-5	\\
11	&	1.66E-5	&	1.66E-5	\\
\hline
\end{tabular}
\label{tab4}
\end{table}

\subsection{The concentration of the carbon substrate and the concentration of oxygen problem \cite{wazwaz2016variational}}
\begin{example}\label{sec5:eq2}
{\rm Consider the coupled Lane-Emden equations, which was used to study the concentration of the carbon substrate and the concentration of oxygen, as
\begin{align}
\left\{
  \begin{array}{ll}
 \displaystyle  y''_{1}(x)+\frac{\alpha_1}{x}y'_{1}(x)=-b+\frac{a\ y_1(x) \ y_2(x) }{\big(l_{1}+y_1)(m_{1}+y_2\big)}+\frac{c\ y_1(x)\ y_2(x) }{\big(l_{2}+y_1(x))(m_{2}+y_2(x)\big)},~~~~x\in(0,1),\vspace{0.25cm}\\
 \displaystyle  y''_{2}(x)+\frac{\alpha_2}{x}y'_{2}(x)=\frac{d\  y_1(x) \ y_2(x) }{\big(l_{1}+y_1(x)\big) \big(m_{1}+y_2(x)\big)}+ \frac{e\ y_1(x)\ y_2(x) }{\big(l_{2}+y_1(x)\big) \big(m_{2}+y_2(x)\big)},\vspace{0.25cm}\\
y'_{1}(0)=0,~~~y_{1}(1)=1,~~y'_{2}(0)=0,~~~y_{2}(1)=1,
\end{array} \right.
\end{align}
where the parameters $l_1=l_2=m_1=m_2= \nicefrac{1}{10000}, a=5,b=1,c=d= \nicefrac{1}{10}, e=\nicefrac{5}{100}$  are fixed as given in \cite{wazwaz2016variational,duan2015oxygen}. Here,  $a_1=a_2=1,b_1=b_2=0,c_1=c_2=1$.}
\end{example}

In Table \ref{tab5}, \ref{tab6} and \ref{tab7}, we list the numerical results of the approximate solution and the absolute error obtained by the proposed method of  Example \ref{sec5:eq2} for $(\alpha_1=\alpha_2=1$), $\alpha_1=\alpha_2=2$ and $\alpha_1=\alpha_2=3$, respectively. We also compare the numerical results of the maximum residual error $[maxr_{1n}, maxr_{2n}]$ obtained by the present method and the results obtained by the modified ADM \cite{duan2015oxygen} in Table \ref{tab8} for $\alpha_1=\alpha_2=2$.

\subsubsection{When the shape factors $\alpha_1=\alpha_2=1$}
By applying the proposed scheme \eqref{sec2:eq8} with the initial guesses  $y_{10}(x)=1, y_{20}(x)=1$, we obtain the 4-terms series solutions as
\begin{align*}
\left\{
  \begin{array}{ll}
\psi_{14}(x)=2.02484-1.02488 x^2+0.00006968 x^4-0.0000308 x^6+8.57\times 10^{-6} x^8,\\
\psi_{24}(x)=1.0375-0.0374966 x^2+2.049\times 10^{-6} x^4-9.06\times 10^{-7} x^6+2.52\times 10^{-7} x^8.
\end{array} \right.
\end{align*}

\begin{table}[htbp]
\caption{Numerical results of the approximate solution $ [\psi_{1n}(x), \psi_{2n}(x)]$ and the absolute error  $ [r_{1n}(x), r_{2n}(x)]$ when   $\alpha_1=\alpha_2=1$ of  Example \ref{sec5:eq2}}
\centering
\setlength{\tabcolsep}{0.05in}
\begin{tabular}{l|cc| cc|cc|cc}
\hline
\cline{1-9}
$x$ & $\psi_{12}(x)$ & $\psi_{22}(x)$  & $r_{12}(x)$ & $r_{22}(x)$ &$\psi_{14}(x)$ &$\psi_{24}(x)$ &$r_{14}(x)$ &$r_{24}(x)$ \\
\cline{1-9}
0.1	&	2.0145977	&	1.0371205	&	2.61E-4	&	7.68E-6	&	2.0145872	&	1.0371202	&	2.67E-4	&	7.87E-6	\\
0.2	&	1.9838514  &	1.0359956  &	2.49E-4	&	7.33E-6	&	1.9838408	&	1.0359953	&	2.40E-4	&	7.07E-6	\\
0.3	&	1.9326076	&	1.0341208	&	2.29E-4	&	6.76E-6	&	1.9325971	&	1.0341205	&	1.99E-4	&	5.86E-6	\\
0.4	&	1.8608665	&	1.0314960	&	2.03E-4	&	5.98E-6	&	1.8608563	&	1.0314957	&	1.50E-4	&	4.42E-6	\\
0.5	&	1.7686285	&	1.0281214	&	1.70E-4	&	5.01E-6	&	1.7686191	&	1.0281211	&	1.00E-4	&	2.95E-6	\\
0.6	&	1.6558940	&	1.0239968	&	1.32E-4	&	3.90E-6	&	1.6558857	&	1.0239966   &	5.69E-5	&	1.67E-6	\\
0.7	&	1.5226632	&	1.0191224	&	9.16E-5	&	2.69E-6	&	1.5226567	&	1.0191222	&	2.49E-5	&	7.33E-7	\\
0.8	&	1.3689369	&	1.0134981	&	5.07E-5	&	1.49E-6	&	1.3689325	&	1.0134980	&	6.88E-6	&	2.02E-7	\\
0.9	&	1.1947156	&	1.0071239	&	1.61E-5	&	4.76E-7	&	1.1947134	&	1.0071239	&	6.08E-7	&	1.78E-8	\\
\hline
\end{tabular}
\label{tab5}
\end{table}

\subsubsection{For shape factors $\alpha_1=\alpha_2=2$}
Using the proposed scheme \eqref{sec2:eq8} with the initial guesses  $y_{10}(x)=1, y_{20}(x)=1$, we obtain the 4-terms series solutions as
\begin{align*}
\left\{
  \begin{array}{ll}
\psi_{14}(x)=1.66653-0.666545 x^2+0.0000172x^4-5.2743\times 10^{-6} x^6+2.054\times 10^{-6} x^8,\\
\psi_{24}(x)=1.025-0.0249964 x^2+5.172\times 10^{-7} x^4-1.582\times 10^{-7} x^6+6.164\times 10^{-8} x^8.
\end{array} \right.
\end{align*}
\begin{table}[htbp]
\caption{Numerical results of the approximate solution $ [\psi_{1n}(x), \psi_{2n}(x)]$ and the absolute error  $ [r_{1n}(x), r_{2n}(x)]$ when   $\alpha_1=\alpha_2=2$ of  Example \ref{sec5:eq2}}
\centering
\setlength{\tabcolsep}{0.03in}
\begin{tabular}{l|cc |cc|cc|cc}
\hline
\cline{1-9}
$x$ & $\psi_{12}(x)$ & $\psi_{22}(x)$  & $r_{12}(x)$ & $r_{22}(x)$ &$\psi_{14}(x)$ &$\psi_{24}(x)$ &$r_{14}(x)$ &$r_{24}(x)$ \\
\cline{1-9}
0.1	&	1.6598747	&	1.0247462	&	1.31E-4	&	3.94E-6	&	1.6598657	&	1.0247459  &	5.70E-5	&	1.71E-6	\\
0.2	&	1.6398780	&	1.0239963	&	1.25E-4	&	3.75E-6	&	1.6398694	&	1.0239960	&	5.10E-5	&	1.53E-6	\\
0.3	&	1.6065503	&	1.0227465	&	1.14E-4	&	3.44E-6	&	1.6065422	&	1.0227462	&	4.20E-5	&	1.26E-6	\\
0.4	&	1.5598915	&	1.0209967	&	1.00E-4	&	3.01E-6	&	1.5598843	&	1.0209965	&	3.14E-5	&	9.43E-7	\\
0.5	&	1.4999020	&	1.0187470	&	8.34E-5	&	2.50E-6	&	1.4998959	&	1.0187468	&	2.07E-5	&	6.23E-7	\\
0.6	&	1.4265818	&	1.0159974	&	6.38E-5	&	1.91E-6	&	1.4265769	&	1.0159973	&	1.15E-5	&	3.47E-7	\\
0.7	&	1.3399312   &	1.0127479	&	4.31E-5	&	1.29E-6	&	1.3399276	&	1.0127478	&	4.97E-6	&	1.49E-7	\\
0.8	&	1.2399505	&	1.0089985	&	2.32E-5	&	6.97E-7	&	1.2399482	&	1.0089984	&	1.33E-6	&	4.00E-8	\\
0.9	&	1.1266400	&	1.0047492	&	7.12E-6	&	2.13E-7	&	1.1266389	&	1.0047491	&	1.13E-7	&	3.40E-9	\\
\hline
\end{tabular}
\label{tab6}
\end{table}
\subsubsection{For shape factors $\alpha_1=\alpha_2=3$}
Making use of the proposed scheme \eqref{sec2:eq8} with the initial guesses  $y_{10}(x)=1, y_{20}(x)=1$, we obtain the 4-terms series solutions as
\begin{align*}
\left\{
  \begin{array}{ll}
\psi_{14}(x)=1.49989-0.4999 x^2+8.1818\times 10^{-6} x^4-1.295\times 10^{-6} x^6+7.802\times 10^{-7} x^8,\\
\psi_{24}(x)=1.01875-0.018747 x^2+2.454\times 10^{-7} x^4-3.886\times 10^{-8} x^6+2.340\times 10^{-8} x^8.
\end{array} \right.
\end{align*}

\begin{table}[htbp]
\caption{Numerical results of the approximate solution $ [\psi_{1n}(x), \psi_{2n}(x)]$ and the absolute error  $ [r_{1n}(x), r_{2n}(x)]$ when   $\alpha_1=\alpha_2=3$ of  Example \ref{sec5:eq2}}
\centering
\setlength{\tabcolsep}{0.035in}
\begin{tabular}{l|cc |cc|cc|cc}
\hline
\cline{1-9}
$x$ & $\psi_{12}(x)$ & $\psi_{22}(x)$  & $r_{12}(x)$ & $r_{22}(x)$ &$\psi_{14}(x)$ &$\psi_{24}(x)$ &$r_{14}(x)$ &$r_{24}(x)$ \\
\cline{1-9}
0.1	&	1.4948975	&	1.0185594	&	8.20E-5	&	2.46E-6	&	1.4948929	&	1.0185592	&	2.00E-5	&	6.01E-7	\\
0.2	&	1.4799003	&	1.0179970	&	7.79E-5	&	2.33E-6	&	1.4798959	&	1.0179968	&	1.79E-5	&	5.37E-7	\\
0.3	&	1.4549050	&	1.0170596	&	7.12E-5	&	2.13E-6	&	1.4549010	&	1.0170595	&	1.47E-5	&	4.41E-7	\\
0.4	&	1.4199117	&	1.0157473	&	6.21E-5	&	1.86E-6	&	1.4199081	&	1.0157472	&	1.09E-5	&	3.28E-7	\\
0.5	&	1.3749204	&	1.0140601	&	5.11E-5	&	1.53E-6	&	1.3749175	&	1.0140600	&	7.17E-6	&	2.15E-7	\\
0.6	&	1.3199313	&	1.0119979	&	3.88E-5	&	1.16E-6	&	1.3199290	&	1.0119978	&	3.96E-6	&	1.18E-7	\\
0.7	&	1.2549445	&	1.0095608	&	2.59E-5	&	7.77E-7	&	1.2549429	&	1.0095607	&	1.68E-6	&	5.04E-8	\\
0.8	&	1.1799602	&	1.0067488	&	1.37E-5	&	4.12E-7	&	1.1799593	&	1.0067487  &	4.43E-7	&	1.33E-8	\\
0.9	&	1.0949786	&	1.0035618	&	4.12E-6	&	1.23E-7	&	1.0949782	&	1.0035618	&	3.69E-8	&	1.10E-9	\\
\hline
\end{tabular}
\label{tab7}
\end{table}

\begin{table}[htbp]\caption{Comparison of the numerical results of the maximum residual error $[maxr_{1n}, maxr_{2n}]$  when  $\alpha_1=\alpha_2=2$  of  Example \ref{sec5:eq2}}
\centering
\addtolength{\tabcolsep}{17pt}
\begin{tabular}{c|cc|cc}
\hline
\multirow{2}{*} & \multicolumn{2}{c|}{The present method} & \multicolumn{2}{c}{Modified ADM  \cite{duan2015oxygen}}  \\ \cline{2-5}
$n$      & $maxr_{1n}$          & $maxr_{2n}$       & $maxr_{1n}$      & $maxr_{2n}$    
\\ \hline
2	&	1.33E-4	&	4.01E-6	&	1.18E-3	&	3.48E-5	\\
3	&	8.87E-5	&	2.66E-6	&	8.53E-4	&	2.51E-5	\\
4	&	5.91E-5	&	1.77E-6	&	6.21E-4	&	1.82E-5	\\
5	&	3.94E-5	&	1.18E-6	&	4.52E-4	&	1.33E-5	\\
6	&	2.62E-5	&	7.87E-7	&	3.29E-4	&	9.69E-6	\\
7	&	1.74E-5	&	5.25E-7	&	2.39E-4	&	7.06E-6	\\
\hline
\end{tabular}
\label{tab8}
\end{table}

\subsection{The steady-state concentrations of CO$_2$ and PGE \cite{duan2015steady}}
\begin{example}\label{sec5:eq3}
{\rm We consider the  system of nonlinear differential equations, which arising in the study of  the steady-state concentrations
of CO$_2$ and PGE as}
\begin{align}
\left\{
  \begin{array}{ll}
\displaystyle  y''_{1}(x)=\frac{\alpha_{1}y_1(x) \ y_2(x)}{1+\beta_1 y_1(x) +\beta_2 y_2(x)},~~~x\in(0,1),\vspace{0.15cm}\\
\displaystyle  y''_{2}(x)=\frac{\alpha_{2}y_1(x) \ y_2(x)}{1+\beta_1 y_1(x) +\beta_2 y_2(x)},~~~\vspace{0.15cm}\\
y_{1}(0)=1,~~~~y_{1}(1)=k,~~y'_{2}(0)=0,~~~~y_{2}(1)=1,
\end{array} \right.
\end{align}
\end{example}
where the constants $\alpha _1, \alpha _2, \beta _1, \beta _2, k$  are normalized parameters, $x$ is the dimensionless distance
as measured from the center, and $k$ is the dimensionless concentration of CO$_2$  at the
surface of the catalyst \cite{duan2015steady}. We fix the parameters $\alpha _1=1, \alpha _2=2, \beta _1=1, \beta _2=3, k=0.5$ as in  \cite{duan2015steady}. By  applying the proposed scheme, we obtain the approximate series solutions as
\begin{align*}
\left\{
  \begin{array}{ll}
\psi_{14}(x)&=1-0.58005 x+0.09292 x^2-0.01397 x^3+0.00173 x^4-0.000703 x^5+0.000057 x^6\\
          &~~~~~~~~~+0.00001590 x^7+8.1396\times 10^{-7} x^8,\\
\psi_{24}(x)&=0.83989+0.185843 x^2-0.027952 x^3+0.003475 x^4-0.0014068 x^5+0.0001145 x^6\\
&~~~~~~~~+0.0000318 x^7+1.627\times 10^{-6} x^8.
\end{array} \right.
\end{align*}

In Table \ref{tab9}, we list the numerical results of the approximate solution and the absolute error obtained by the proposed method of  Example \ref{sec5:eq3}. We also compare the numerical results of the maximum residual error $[maxr_{1n}, maxr_{2n}]$ obtained by the present method and the results obtained by the numerical method  \cite{hao2018efficient} in Table \ref{tab10}.

\begin{table}[htbp]
\caption{Numerical results of the approximate solution $[\psi_{1n}(x), \psi_{2n}(x)]$ and the absolute error  $[r_{1n}(x), r_{2n}(x)]$ when  $\alpha _1=1, \alpha _2=2, \beta _1=1, \beta _2=3, k=0.5$ of  Example \ref{sec5:eq3}}
\centering
\setlength{\tabcolsep}{0.04in}
\begin{tabular}{l|ll| ll|ll|cc}
\hline
\cline{1-9}
$x$ & $\psi_{12}(x)$ & $\psi_{22}(x)$  & $r_{12}(x)$ & $r_{22}(x)$ &$\psi_{14}(x)$ &$\psi_{24}(x)$ &$r_{14}(x)$ &$r_{24}(x)$ \\
\cline{1-9}
0.1	&	0.9428976	&	0.8415025	&	1.30E-4	&	2.60E-4	&	0.9429113	&	0.8417515	&	8.59E-7	&	1.71E-6	\\
0.2	&	0.8875704	&	0.8468806	&	4.90E-5	&	9.80E-5	&	0.8875994	&	0.8471355	&	3.27E-7	&	6.55E-7	\\
0.3	&	0.8339413	&	0.8556549	&	2.27E-4	&	4.55E-4	&	0.8339853	&	0.8559153	&	1.40E-6	&	2.81E-6	\\
0.4	&	0.7819361	&	0.8676770	&	4.08E-4	&	8.17E-4	&	0.7819931	&	0.8679387	&	2.39E-6	&	4.78E-6	\\
0.5	&	0.7314823	&	0.8828019	&	5.87E-4	&	1.17E-3	&	0.7315485	&	0.8830574	&	3.14E-6	&	6.28E-6	\\
0.6	&	0.6825087	&	0.9008874	&	7.54E-4	&	1.50E-3	&	0.6825785	&	0.9011254	&	3.36E-6	&	6.73E-6	\\
0.7	&	0.6349449	&	0.9217922	&	8.93E-4	&	1.78E-3	&	0.6350110	&	0.9219982	&	2.63E-6	&	5.26E-6	\\
0.8	&	0.5887200	&	0.9453750	&	9.81E-4	&	1.96E-3	&	0.5887737	&	0.9455316   &	4.55E-7	&	9.10E-7	\\
0.9	&	0.5437627	&	0.9714928	&	9.91E-4	&	1.98E-3	&	0.5437943	&	0.9715808	&	3.48E-6	&	6.97E-6	\\
\hline
\end{tabular}
\label{tab9}
\end{table}

\begin{table}[htbp]
\caption{Comparison of the numerical results of the maximum residual errors $[maxr_{1n}, maxr_{2n}]$  when $\alpha _1=1, \alpha _2=2, \beta _1=1, \beta _2=3, k=0.5$ of  Example \ref{sec5:eq3}}
\centering
\addtolength{\tabcolsep}{15pt}
\begin{tabular}{l|ll|ll}
\hline
\multirow{2}{*} & \multicolumn{2}{c|}{The present method} & \multicolumn{2}{c}{Method in \cite{hao2018efficient}}  \\ \cline{2-5}
$n$      & $maxr_{1n}$          & $maxr_{2n}$       & $maxr_{1n}$      & $maxr_{2n}$    \\ \hline
2	&	4.36E-3	&	8.71E-3	&	3.86E-2	&	7.73E-2	\\
3	&	9.99E-4	&	2.00E-3	&	1.14E-3	&	2.28E-3	\\
4	&	3.54E-5	&	7.07E-5	&	4.87E-4	&	9.75E-4	\\
5	&	3.38E-6	&	6.77E-6	&	4.07E-5	&	8.14E-5	\\
6	&	5.29E-7	&	1.06E-6	&	1.01E-6	&	2.03E-6	\\
7	&	1.05E-7	&	2.10E-7	&	3.90E-7	&	7.81E-7	\\
8	&	2.76E-8	&	5.52E-8	&	4.88E-7	&	9.76E-9	\\
9	&	2.80E-9	&	5.61E-9	&	3.64E-9	&	7.28E-9	\\
10	&	2.59E-10&	5.17E-10&	1.22E-10	&	2.43E-10	\\
\hline
\end{tabular}
\label{tab10}
\end{table}

\newpage
\section{Concluding remarks}
The theory of coupled Lane-Emden equation finds its vital presence in many of the natural or physical processes such as occurs in catalytic diffusion reactions \cite{rach2014solving}, and some coupled Lane-Emden equations that relate the concentration of the carbon substrate and the concentration of oxygen \cite{duan2015oxygen}. An analytical approach has been presented for the approximate series solution of the coupled Lane-Emden equation. Unlike the standard Adomian decomposition method, the proposed technique does not require the computation of unknown constants. Unlike the numerical methods,   our approach does not require any linearization or discretization of variables.   Convergence and error estimation of the method is provided.


\end{document}